\newtheorem{thm}{Theorem} [section]
\newtheorem{lem}[thm]{Lemma}
\theoremstyle{definition}
\theoremstyle{remark}
\newtheorem{rem}[thm]{Remark}
\numberwithin{equation}{section}
\begin{document}
\title[On the number of zeros to the equation $f(x_1)+...+f(x_n)=a$]
{On the number of zeros to the equation $f(x_1)+...+f(x_n)=a$ over finite fields}
\begin{abstract} 
Let $p$ be a prime, $k$ a positive integer and let $\mathbb{F}_q$
be the finite field of $q=p^k$ elements. Let $f(x)$ be a
polynomial over $\mathbb F_q$ and $a\in\mathbb F_q$. We denote
by $N_{s}(f,a)$ the number of zeros of $f(x_1)+\cdots+f(x_s)=a$.
In this paper, we show that
$$\sum_{s=1}^{\infty}N_{s}(f,0)x^s=\frac{x}{1-qx}
-\frac{x { M_f^{\prime}}(x)}{qM_f(x)},$$
where
$$M_f(x):=\prod_{m\in\mathbb F_q^{\ast}
\atop{S_{f, m}\ne 0}}\Big(x-\frac{1}{S_{f,m}}\Big)$$
with $S_{f, m}:=\sum_{x\in \mathbb F_q}\zeta_p^{{\rm Tr}(mf(x))}$,
$\zeta_p$ being the $p$-th primitive unit root and
${\rm Tr}$ being the trace map from $\mathbb F_q$ to
$\mathbb F_p$. This extends Richman's theorem
which treats the case of $f(x)$ being a monomial.
Moreover, we show that the generating
series $\sum_{s=1}^{\infty}N_{s}(f,a)x^s$ is a rational
function in $x$ and also present its explicit
expression in terms of the first $2d+1$ initial values
$N_{1}(f,a), ..., N_{2d+1}(f,a)$, where $d$ is a positive
integer no more than $q-1$. From this result,
the theorems of Chowla-Cowles-Cowles
and of Myerson can be derived.
\end{abstract}
\author[C.X. Zhu]{Chaoxi Zhu}
\address{Science and Technology on Communication
Security Laboratory, Chengdu 610041, P.R. China}
\address{Mathematical College, Sichuan University,
Chengdu 610064, P.R. China}
\email{zhuxi0824@126.com}
\author[Y.L. Feng]{Yulu Feng}
\address{Mathematical College, Sichuan University, Chengdu 610064, P.R. China}
\email{yulufeng17@126.com}
\author[S.F. Hong]{Shaofang Hong$^*$}
\address{Mathematical College, Sichuan University, Chengdu 610064, P.R. China}
\email{sfhong@scu.edu.cn; s-f.hong@tom.com; hongsf02@yahoo.com}
\author[J.Y. Zhao]{Junyong Zhao}
\address{Mathematical College, Sichuan University, Chengdu 610064, P.R. China}
\address{School of Mathematics and Statistics, Nanyang Institute of Technology,
Nanyang 473004, P.R. China}
\email{zhjy626@163.com}
\thanks{$^*$S.F. Hong is the corresponding author and was supported
partially by National Science Foundation of China Grant \#11771304.}
\keywords{Exponential sum, generating series, minimal polynomial, rationality, zero.}
\subjclass[2000]{Primary 11T23, 11T24}
\maketitle

\section{Introduction}
Let $p$ be a prime number and let $\mathbb{F}_q$ be the finite
field of $q=p^k$ elements with $k$ being a positive integer.
Let $F(x_1,\cdots,x_n)$ be a polynomial with $n$ variables
in $\mathbb{F}_q$. We set $N(F=0)$ to be the number of
$\mathbb{F}_q$-rational points of the affine hypersurface
$F(x_1,\cdots,x_n)=0$ over $\mathbb{F}_q$. Calculating the
exact value of $N(F=0)$ is an important topic in number
theory and finite fields. In general, it is difficult to
give an explicit formula for $N(F=0)$. The $p$-adic behavior
of $N(F=0)$ has been deeply investigated by lots of authors
(see, for example, \cite{[AS]}, \cite{[Ax]}, \cite{[CW]}, \cite{[Che]},
\cite{[K]}, \cite{[MM]}, \cite{[Wan2]} and \cite{[War]}). Finding the
explicit formula for $N(F=0)$ under certain conditions received attention
from many authors for many years. See, for examples, \cite{[CL]},
\cite{[Chowla]} to \cite{[IR]}, \cite{[Myerson]} to
\cite{Dq} and \cite{[Weil]} to \cite{[WJ2]}.

Let $f(x)$ be a polynomial over $\mathbb F_q$ and $a\in\mathbb F_q$.
Denote by $N_{s}(f,a)$ the number
of $s$-tuples $(x_1,...,x_s)\in \mathbb F_{q}^{s}$ such that
\begin{eqnarray}\label{f}
f(x_1)+...+f(x_s)=a.
\end{eqnarray}
If $q=p$ and $a=0$, then Chowla, Cowles and Cowles \cite{[Chowla]}
proved that $\sum_{s=1}^{\infty}N_{s}(f,0)x^s$ is
a rational function in $x$ but its explicit expression is unknown.
In the case $f(x)=x^3$, $q=p$ and $p\equiv1\pmod3$, Chowla, Cowles
and Cowles \cite{[Chowla]} showed that
$$
\sum_{s=1}^{\infty}N_{s}(x^3,0) x^{s}=\frac{x}{1-p x}
+\frac{(p-1)(2+bx)x^{2}}{1-3 p x^{2}-p b x^{3}},
$$
where $b$ is uniquely determined by $4p=b^2+27c^2~~\text {and}~~b\equiv1\pmod3.$
From this, one can read an expression of $N_{s}(x^3,0)$
for each integer $s\ge 1$. Myerson \cite{[Myerson]} extended the
Chowla-Cowles-Cowles theorem from $\mathbb F_p$ to $\mathbb F_q$.
Let $a\in\mathbb F_q^{\ast}:=\mathbb F_q\setminus \{0\}$. If $q\equiv2\pmod3$,
it is known that every element in $\mathbb F_q$ is a cube, and so $N_s(x^3, a)=q^{s-1}$.
If $q\equiv1\pmod3$ with $p\equiv2\pmod3$, then Wolfmann \cite{[WJ2]}
gave a formula for $N_s(x^3, a)$ but did not present the explicit expression
for $\sum_{s=1}^{\infty}N_s(x^3, a)x^s$. By using Gauss sum, Jacobi sum
and the Hasse-Davenport relation, Hong and Zhu \cite{[HZ]} showed that
if $q\equiv1\pmod3$, then the generating function
$\sum_{s=1}^{\infty}N_s(x^3, a)x^s$ is a rational function in $x$
and also presented its explicit expression. In \cite{[ZHZ]}, Zhao,
Feng, Hong and Zhu used the cyclotomic theory and exponential
sums to show that the generating function $\sum_{s=1}^{\infty}
N_s(x^4, a)x^s$ is a rational function in $x$ and also provided
its explicit expression.

If $f(x)=x^e$ is a monomial and $q=p$ with $p\equiv1\pmod e$
with $e\ge 2$ being an integer, then Richman \cite{[Richman]}
extended the Chowla-Cowles-Cowles theorem by showing that
$$
\sum_{s=1}^{\infty}N_{s}(x^e,0) x^{s}=\frac{x}{1-px}
-\frac{(p-1)y^{\prime}(x)x}{pey(x)},
$$
where $y(x)$ is the {\it reciprocal polynomial} of the minimal
polynomial $g(x)$ of the exponential sum
$\sum_{k=0}^{p-1}{\exp}(2\pi {\rm i}k^e/p)$, i.e.,
$y(x)=x^{\deg(g(x))}g(1/x)$, and
$y^{\prime}(x)$ stands for the derivative of $y(x)$.
Richman pointed also out that this result
can be easily extended to $\mathbb F_q$ by
replacing $\sum_{k=0}^{p-1}{\exp}(2\pi {\rm i}k^e/p)$
with $\sum_{k\in \mathbb F_q}{\exp}(2\pi {\rm i}{\rm Tr}(k^e)/p)$,
where ${\rm Tr}$ denotes the trace map from $\mathbb F_q$
to its prime subfield $\mathbb{F}_p$. This reveals the relationship
between the numerator and denominator of the rational
expression of $\sum_{s=1}^{\infty}N_{s}(x^e,0) x^{s}$.

In this paper, we are mainly concerned with the number
$N_s(f,a)$ of zeros of equation (\ref{f}) and the
rationality of the generating series
$\sum_{s=1}^{\infty}N_{s}(f,a)x^s$. As usual, let $\mathbb{Z},
\mathbb{Q}$ and $\mathbb{C}$ denote the ring of integers,
the field of rational numbers and the field of complex numbers.
Let $\mathbb{N}$ and $\mathbb{N}^*$ stand for the set of all
nonnegative integers and the set of all positive integers. Let
$\{a_s\}_{s=1}^{\infty}$ be a sequence with $a_s\in \mathbb Z$.
If there exists a polynomial $g(x)=\sum_{i=0}^{d}k_ix^i\in\mathbb Z[x]$
with $k_d\not=0$ such that
\begin{align*}
k_0a_{j+1}+k_1 a_{j+2}+...+k_{d-1}a_{j+d}+k_da_{j+d+1}=0
\end{align*}
holds for all integers $j\geq 0$, then $\{a_s\}_{s=1}^{\infty}$
is called a {\it linear recursion sequence} and $g(x)$ is
called a {\it generating polynomial} of $\{a_s\}_{s=1}^{\infty}$.
We also say that the sequence $\{a_s\}_{s=1}^{\infty}$ is generated
by $g(x)$. It is easy to see that if $\{a_s\}_{s=1}^{\infty}$ is
a linear recursion sequence, and both of $g_1(x)$ and $g_2(x)$
are generating polynomials of $\{a_s\}_{s=1}^{\infty}$,
then $g_1(x)+g_2(x)$ is a generating polynomial of
$\{a_s\}_{s=1}^{\infty}$ and $kx^eg_1(x)$ is
a generating polynomial of $\{a_s\}_{s=1}^{\infty}$
for any $k\in \mathbb Z$ and $e\in\mathbb N$. This infers
that for any $f(x)\in\mathbb{Z}[x]$, $f(x)g_1(x)$ is
a generating polynomial of $\{a_s\}_{s=1}^{\infty}$.
It then follows that the set $\wp$ consisting
of all the generating polynomials of the sequence
$\{a_s\}_{s=1}^{\infty}$ forms an ideal of $\mathbb Z[x]$.
Furthermore, by Euclidean algorithm in $\mathbb Z[x]$,
one can easily deduce that if $h(x)\in\wp$ satisfies that
the degree of $h(x)$ is minimal and the greatest common
divisor of all the coefficients of $h(x)$ is equal to
1, then $h(x)|g(x)$ for any $g(x)\in\wp$. Therefore
$\wp$ is a principle ideal of $\mathbb Z[x]$ generated
by $h(x)$. Such $h(x)$ is called the {\it minimal polynomial}
of the sequence $\{a_s\}_{s=1}^{\infty}$. We define the
{\it degree} of the sequence $\{a_s\}_{s=1}^{\infty}$, denoted
by $\deg \{a_s\}_{s=1}^{\infty}$, to be the degree of
the minimal polynomial of the sequence $\{a_s\}_{s=1}^{\infty}$.

We denote by ${\rm Tr}(b):=\sum_{i=0}^{k-1}b^{p^i}$ the
{\it trace map} from $\mathbb F_{p^k}$ to $\mathbb F_p$,
where $b\in \mathbb F_{p^k}$. Take
$\zeta_p:=\exp(\frac{2\pi {\rm i}}{p})$ to be
the $p$-th primitive root of unity for convenience.
For any $m\in \mathbb F_q$, one defines the exponential
sum $S_{f,m}$ over $\mathbb{F}_q$ as follows:
$$S_{f,m}:=\sum_{x\in \mathbb F_q}
\zeta_p^{{\rm Tr}(mf(x))}.$$
Let
$$\Omega_f:=\{S_{f,m}: m\in\mathbb F_q^{\ast}
\ \text{and} \ S_{f,m}\not=0\}$$
be the set of all distinct nonzero exponential sums $S_{f,m}$.
Associated to the polynomial $f(x)$, we introduce
an auxiliary polynomial $m_f(x)$ as follows:
$$m_f(x):=\prod_{\lambda\in\Omega_f}(x-\lambda).$$
One can show that $m_f(x)$ is of integer coefficients.
For any given $m\in\mathbb F_q^{\ast}$ and $f(x)$,
the minimal polynomial of $S_{f, m}$ divides $m_f(x)$.
Myerson \cite{[M]} and Wan \cite{Dq2} investigated
the degree of the minimal polynomial of $S_{f,1}$.
In what follows, we let
$$u_{s}(f,a):=N_{s}(f,a)-q^{s-1}$$
for all positive integers $s$. The first main
result of this paper can be stated as follows.

\begin{thm}\label{thm3}
Let $a\in\mathbb{F}_q$. Then each of the following is true:

{\rm (i).} The sequence $\{u_{s}(f,a)\}_{s=1}^{\infty}$
is a linear recursion sequence with $m_f(x)$ being
its generating polynomial. Furthermore, $m_f(x)$
is the minimal polynomial of the sequence
$\{u_{s}(f,0)\}_{s=1}^{\infty}$.

{\rm (ii).} We have
$$\sum_{s=1}^{\infty}N_{s}(f,a)x^s=\frac{x}{1-qx}
-\frac{x{\tilde M_{f,a}}(x)}{qM_f(x)},$$
where
$$M_f(x):=\prod_{m\in\mathbb F_q^{\ast}\atop{S_{f,m}\not=0}}
\Big(x-\frac{1}{S_{f,m}}\Big)$$
and
$$
\tilde M_{f,a}(x):=\sum_{n\in \mathbb F_q^{\ast}
\atop{S_{f,n}\not=0}}\prod_{m\in \mathbb
F_q^{\ast}\setminus\{n\}\atop{S_{f,m}\not=0}}
\zeta_p^{{\rm Tr}(-na)}\Big(x-\frac{1}{S_{f,m}}\Big).
$$
In particular, if $a=0$, then $\tilde M_{f,a}(x)=\tilde M_{f,0}(x)$
is equal to the derivative of $M_f(x)$.
\end{thm}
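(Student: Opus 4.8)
The plan is to begin from the additive-character expansion of the counting function. Using the orthogonality relation $\frac1q\sum_{m\in\mathbb F_q}\zeta_p^{\mathrm{Tr}(mt)}$, which equals $1$ when $t=0$ and $0$ otherwise, together with the factorisation of the sum over $(x_1,\dots,x_s)$ into a product of $s$ identical inner sums, I would write
$$N_s(f,a)=\frac1q\sum_{m\in\mathbb F_q}\zeta_p^{\mathrm{Tr}(-ma)}\Big(\sum_{x\in\mathbb F_q}\zeta_p^{\mathrm{Tr}(mf(x))}\Big)^{\!s}=\frac1q\sum_{m\in\mathbb F_q}\zeta_p^{\mathrm{Tr}(-ma)}S_{f,m}^{\,s}.$$
The term $m=0$ gives $S_{f,0}=q$ and contributes exactly $q^{s-1}$, so after subtracting it one obtains the closed form
$$u_s(f,a)=\frac1q\sum_{m\in\mathbb F_q^{\ast},\,S_{f,m}\ne0}\zeta_p^{\mathrm{Tr}(-ma)}S_{f,m}^{\,s},$$
which drives both parts of the theorem.

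For part (i), this formula exhibits $u_s(f,a)$ as a fixed linear combination of the powers $\lambda^s$ for $\lambda\in\Omega_f$. Since every $\lambda\in\Omega_f$ satisfies $m_f(\lambda)=0$, multiplying by $\lambda^{j+1}$ and summing against the coefficients shows immediately that $m_f$ is a generating polynomial of $\{u_s(f,a)\}$ for every $a$. To prove minimality at $a=0$, I would use that $\zeta_p^{\mathrm{Tr}(0)}=1$, so the coefficient of $\lambda^s$ equals $b_\lambda:=\frac1q\#\{m\in\mathbb F_q^{\ast}:S_{f,m}=\lambda\}$, which is strictly positive. If $g=\sum_ik_ix^i$ is any generating polynomial, then $\sum_ik_iu_{j+1+i}=\sum_{\lambda\in\Omega_f}b_\lambda\lambda^{j+1}g(\lambda)=0$ for all $j\ge0$; the linear independence of the geometric sequences $(\lambda^{j+1})_j$ attached to distinct $\lambda$ (a Vandermonde argument) then forces $b_\lambda g(\lambda)=0$, hence $g(\lambda)=0$ for every $\lambda\in\Omega_f$. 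Thus $m_f\mid g$, and as $m_f$ is monic with integer coefficients it is the minimal polynomial.

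For part (ii), I would sum the two pieces as geometric series, obtaining
$$\sum_{s=1}^{\infty}N_s(f,a)x^s=\frac{x}{1-qx}+\frac1q\sum_{m\in\mathbb F_q^{\ast},\,S_{f,m}\ne0}\zeta_p^{\mathrm{Tr}(-ma)}\frac{S_{f,m}x}{1-S_{f,m}x}.$$
The identity $\frac{S_{f,m}x}{1-S_{f,m}x}=\frac{-x}{x-1/S_{f,m}}$ recasts the second summand as $-\frac{x}{q}\sum_m\frac{\zeta_p^{\mathrm{Tr}(-ma)}}{x-1/S_{f,m}}$. The key point is that this is exactly the partial-fraction form of $\tilde M_{f,a}/M_f$: deleting the factor indexed by $n$ from the product $M_f$ gives $\prod_{m\ne n}(x-1/S_{f,m})=M_f(x)/(x-1/S_{f,n})$, whence $\tilde M_{f,a}(x)/M_f(x)=\sum_n\zeta_p^{\mathrm{Tr}(-na)}/(x-1/S_{f,n})$. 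Substituting yields the asserted expression. For the final clause, at $a=0$ all the weights $\zeta_p^{\mathrm{Tr}(-na)}$ collapse to $1$, so $\tilde M_{f,0}(x)=\sum_n\prod_{m\ne n}(x-1/S_{f,m})$ is precisely $M_f'(x)$ by the product rule for differentiation.

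The one genuinely delicate step is the minimality assertion in part (i): one must show that no root $\lambda\in\Omega_f$ may be discarded, and this is exactly where the positivity of the weights $b_\lambda$---available only at $a=0$---combines with the independence of geometric sequences. Everything else is bookkeeping: the integrality of $m_f$ is granted by the discussion preceding the theorem, and the fact that $M_f$ is a product over $m$ rather than over distinct values causes no difficulty, since the partial-fraction cancellation removes one factor at a time.
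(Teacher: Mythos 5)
Your character-sum formula for $u_s(f,a)$, the recursion argument showing that $m_f(x)$ generates $\{u_s(f,a)\}_{s=1}^{\infty}$, and the whole of part (ii) (geometric series, the rewriting $\frac{S_{f,m}x}{1-S_{f,m}x}=\frac{-x}{x-1/S_{f,m}}$, the partial-fraction identification of $\tilde M_{f,a}/M_f$, and the product rule at $a=0$) coincide step for step with the paper's proof. Your minimality argument at $a=0$ is the one genuine variant: the paper computes $\sum_s u_s(f,0)x^{s-1}$ as a sum of terms $\frac{b_\lambda\lambda}{1-\lambda x}$ with $b_\lambda\neq 0$ and then invokes its Lemma 2.6 (a monic $r(x)$ with distinct nonzero roots is the minimal polynomial of a sequence it generates if and only if all partial-fraction numerators are nonzero, proved there by a gcd argument on $\overline{r}(x)$ and the numerator polynomial), whereas you deduce $b_\lambda g(\lambda)=0$ for every generating polynomial $g$ directly from the nonsingularity of a Vandermonde matrix in the distinct nonzero $\lambda\in\Omega_f$. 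Both are correct; yours is more self-contained, while the paper's lemma also packages the converse direction and the uniqueness of the partial-fraction coefficients.

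There is, however, one genuine gap: the integrality of $m_f(x)$. Under the paper's definitions, a generating polynomial and the minimal polynomial of an integer sequence must lie in $\mathbb{Z}[x]$, so part (i) is not established until one knows $m_f(x)\in\mathbb{Z}[x]$; your final step ``$m_f\mid g$, and as $m_f$ is monic with integer coefficients it is the minimal polynomial'' rests on exactly this. You wave it off as ``granted by the discussion preceding the theorem,'' but that discussion merely asserts ``one can show that $m_f(x)$ is of integer coefficients''; the actual proof is carried out inside the paper's proof of Theorem 1.1(i) and occupies roughly half of it. Concretely: the paper shows that $g(x):=\prod_{m\in\mathbb F_q^{\ast}}(x-S_{f,m})$ is fixed by every $\sigma\in\mathrm{Gal}(\mathbb Q(\zeta_p)/\mathbb Q)$, because $\sigma(S_{f,m})=S_{f,hm}$ and $m\mapsto hm$ permutes $\mathbb F_q^{\ast}$, so $g\in\mathbb Q[x]$; its coefficients are algebraic integers in $\mathbb Z[\zeta_p]$, hence lie in $\mathbb Q\cap\mathbb Z[\zeta_p]=\mathbb Z$; and finally Lemma 2.4 (the radical of a monic integer polynomial is again a monic integer polynomial, via unique factorization in $\mathbb Z[x]$) transfers integrality from $g$ to the squarefree polynomial $m_f$. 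To complete your proof you must either reproduce this Galois/algebraic-integer argument or explicitly cite it as an established result rather than as an unproved assertion in the introduction.
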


By using Theorem \ref{thm3}, we can deduce an explicit
expression of $\sum_{s=1}^{\infty}N_{s}(f,a)x^s$
in terms of the initial values
$N_{1}(f,a),N_{2}(f,a),\cdots, N_{2\deg \{u_{s}(f,a)\}_{s=1}
^{\infty}+1}(f,a)$. That is, we have the
following second main result of this paper.

\begin{thm}\label{thm2}
Let $a\in\mathbb{F}_q$. Then the generating series
$\sum_{s=1}^{\infty}N_{s}(f,a)x^s$ is a rational
function in $x$. Furthermore, we have
$$\sum_{s=1}^{\infty}N_{s}(f,a)x^s
=\frac{x}{1-qx}+\frac{\sum_{i=1}^{d}
\Big(\sum_{\substack{j+k=i\\ k\ge 0, j\ge 1}}
c_ku_{j}(f,a)\Big)x^i}{\sum_{i=0}^{d}c_ix^i},$$
where $d:=\deg \{u_{s}(f,a)\}_{s=1}^{\infty}$ and
$X:=(c_{d}, ..., c_1, c_0)^T$ is any given nonzero
integer solution of $AX=0$ with
$A:=(u_{i+j-1}(f,a))_{1\leq i, j\leq d+1}$
being the Hankel matrix of order $d+1$ associated
with the sequence $\{u_{s}(f,a)\}^{\infty}_{s=1}$.
\end{thm}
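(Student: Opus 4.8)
The plan is to peel off the elementary part of the series and reduce everything to the rational form of the generating function of the shifted sequence $\{u_s(f,a)\}_{s=1}^{\infty}$. Since $u_s(f,a)=N_s(f,a)-q^{s-1}$ and $\sum_{s=1}^{\infty}q^{s-1}x^s=\frac{x}{1-qx}$, it suffices to prove that
$$\sum_{s=1}^{\infty}u_s(f,a)x^s=\frac{\sum_{i=1}^{d}\big(\sum_{j+k=i,\,k\ge0,\,j\ge1}c_ku_j(f,a)\big)x^i}{\sum_{i=0}^{d}c_ix^i}.$$
By Theorem \ref{thm3}(i), $\{u_s(f,a)\}$ is a linear recursion sequence; write $h(x)=\sum_{i=0}^{d}k_ix^i$ for its minimal polynomial, so that $\deg h=d$ and $\sum_{i=0}^{d}k_iu_{m+i}(f,a)=0$ for every $m\ge1$.

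Set $U(x):=\sum_{s\ge1}u_s(f,a)x^s$ and $P(x):=\sum_{i=0}^{d}c_ix^i$. The heart of the computation is that the coefficient of $x^n$ in the product $U(x)P(x)$ is $\sum_{l=0}^{d}c_lu_{n-l}(f,a)$, with the convention $u_m:=0$ for $m\le0$. For $1\le n\le d$ this collapses to $\sum_{k=0}^{n-1}c_ku_{n-k}(f,a)$, which is exactly the coefficient of $x^n$ in the proposed numerator; for $n>d$ the sum ranges over a full recurrence window, and I will argue it vanishes. Granting the vanishing, $U(x)P(x)$ is a polynomial of degree at most $d$ and the displayed identity — hence the rationality of $\sum_{s\ge1}N_s(f,a)x^s$ — follows at once.

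The crux is therefore to show that the null vector $X$ furnishes a genuine generating polynomial, i.e. that $\sum_{l=0}^{d}c_lu_{n-l}(f,a)=0$ holds for all $n>d$, not merely for the $d+1$ windows recorded by $AX=0$. Putting $\gamma_r:=c_{d-r}$, the equation $AX=0$ reads $\sum_{r=0}^{d}\gamma_ru_{m+r}(f,a)=0$ for $m=1,\dots,d+1$, and I aim to prove $\gamma(x):=\sum_r\gamma_rx^r$ is a scalar multiple of $h(x)$. Since $h\mid m_f$ by Theorem \ref{thm3}(i) and $m_f$ has distinct nonzero roots, $h$ has $d$ distinct nonzero roots $\lambda_1,\dots,\lambda_d\in\Omega_f$, so $u_s(f,a)=\sum_{i=1}^{d}a_i\lambda_i^s$ with each $a_i\ne0$ by minimality. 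Substituting into the relations gives $\sum_{i=1}^{d}a_i\gamma(\lambda_i)\lambda_i^m=0$ for $m=1,\dots,d$; the generalized Vandermonde matrix $(\lambda_i^m)$ is invertible because the $\lambda_i$ are distinct and nonzero, forcing $a_i\gamma(\lambda_i)=0$ and hence $\gamma(\lambda_i)=0$ for every $i$. As $\deg\gamma\le d$, this yields $\gamma(x)=c\,h(x)$, so $\gamma$ inherits the full recurrence of $h$ and the high-degree coefficients of $U(x)P(x)$ indeed vanish. Upgrading the finitely many Hankel relations to the full recurrence through this distinct-root argument is the main obstacle.

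Finally I would dispatch the bookkeeping. A nonzero integer solution exists because $A(k_0,\dots,k_d)^T=0$ by the recurrence and $h\in\mathbb Z[x]$. Minimality of $d$ makes the order-$d$ Hankel matrix nonsingular — a shorter dependence would produce a generating polynomial of degree below $d$ — so $A$ has rank $d$, its null space is one-dimensional, and any two nonzero solutions differ by a rational scalar that cancels between numerator and denominator; thus the stated rational function is independent of the choice of $X$. Moreover $k_0=h(0)\ne0$, since $h\mid m_f$ and $m_f(0)=\prod_{\lambda\in\Omega_f}(-\lambda)\ne0$, so $c_d\ne0$ and $P(x)$ has degree exactly $d$. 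Assembling these facts completes the proof.
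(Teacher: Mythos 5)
Your proposal is correct, and it handles the crux of the theorem --- upgrading the $d+1$ Hankel relations $AX=0$ to the full recurrence $\sum_{l=0}^{d}c_lu_{n-l}(f,a)=0$ for all $n>d$ --- by a genuinely different route than the paper. The paper's proof is a formal induction on the window index: it fixes a monic degree-$d$ generating polynomial $g(x)=\sum_{i=0}^{d-1}a_ix^i+x^d$ (available because the minimal polynomial is monic up to sign, being a degree-$d$ divisor of the monic $m_f(x)$), uses $g$ to replace each top term $u_{r+1+j}$ by a combination of earlier terms, regroups the double sum, and applies the inductive hypothesis; nothing about the roots of the minimal polynomial is needed, so that argument works for any integer linear recursion admitting a monic degree-$d$ generating polynomial. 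You instead exploit the structure coming from Theorem \ref{thm3}(i) and Lemma \ref{RRRR}: since the minimal polynomial $h$ divides $m_f(x)$, it has $d$ distinct nonzero roots, so $u_s(f,a)=\sum_{i=1}^{d}a_i\lambda_i^s$ with all $a_i\neq 0$, and invertibility of $(\lambda_i^m)_{1\le m,i\le d}$ forces $\gamma(\lambda_i)=0$ for every $i$, whence $\gamma=c\,h$. Your route buys more structural information: it shows every nonzero null vector of $A$ is a scalar multiple of the reversed coefficient vector of $h$, so the null space is one-dimensional and the rational expression is visibly independent of the choice of $X$ (the paper gets the "any given nonzero solution" clause by running its induction for each such $X$ separately); you also obtain nonsingularity of the order-$d$ Hankel matrix, and your direct exhibition of the integer null vector $(k_0,\dots,k_d)^T$ avoids the paper's appeal to Lemma \ref{AX}. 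The price is that your argument leans on the simple-nonzero-root structure of $m_f(x)$, so it is less elementary and does not transfer to arbitrary integer recursions, whereas the paper's induction does. One caveat: your parenthetical justification that the order-$d$ Hankel matrix is nonsingular ("a shorter dependence would produce a generating polynomial of degree below $d$") is incomplete as stated, since a null vector of that smaller matrix only yields $d$ window relations, not a full recurrence; but your own Vandermonde argument closes this instantly (a polynomial of degree at most $d-1$ vanishing at the $d$ distinct $\lambda_i$ is identically zero), and the remark is in any case redundant, because your main argument already covers every nonzero null vector of $A$.
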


\begin{rem}
The positive integer $d$ in Theorem \ref{thm2}
can be taken as any integer greater than
$\deg \{u_{s}(f,a)\}_{s=1}^{\infty}$, and the rational
expression of $\sum_{s=1}^{\infty}N_{s}(f,a)x^s$
is unchanged.
\end{rem}

This paper is organized as follows. First of all, in Section 2,
we show several preliminary lemmas that are needed in the proofs
of Theorems \ref{thm3} and \ref{thm2}. In Section 3,
we present the proofs of Theorems \ref{thm3} and
\ref{thm2}. Two examples are given in the last section
to demonstrate the validity of Theorems \ref{thm3} and \ref{thm2}.

\section{Preliminary lemmas}
In this section, we present several preliminary lemmas that are
needed in proving Theorems \ref{thm3} and \ref{thm2}.
\begin{lem}\cite{[Myerson]}\label{M}
Let $p$ be a prime number and $k$ be a positive integer.
Let $\mathbb F_q$ be the finite field of $q=p^k$ elements
and $\mathbb F_q^{\ast}$ its multiplicative group. Then for
any $x_0\in \mathbb F_q$, we have
$$
\sum_{x \in{\mathbb F_q}}\zeta_p^{\operatorname{\rm Tr}(xx_0)}
=\left\{\begin{array}{lll}{q} & {\text { if }} & {x_0=0,} \\
{0} & {\text { if }} & {x_0 \neq 0.}\end{array}\right.
$$
\end{lem}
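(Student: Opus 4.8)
The plan is to recognize this sum as the orthogonality relation for the canonical additive character $\psi(y):=\zeta_p^{{\rm Tr}(y)}$ of $\mathbb{F}_q$, and to treat the two cases separately. When $x_0=0$, the argument is immediate: every summand equals $\zeta_p^{{\rm Tr}(0)}=\zeta_p^0=1$, and there are $q$ of them, so the sum is $q$.

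The substance lies in the case $x_0\neq 0$. First I would record that ${\rm Tr}$ is $\mathbb{F}_p$-linear, hence additive, so that $\psi$ is a character of the additive group $(\mathbb{F}_q,+)$. Write $S:=\sum_{x\in\mathbb{F}_q}\zeta_p^{{\rm Tr}(xx_0)}$. Since $x_0\neq 0$, the map $x\mapsto {\rm Tr}(xx_0)$ is a nonzero $\mathbb{F}_p$-linear functional on $\mathbb{F}_q$, so there exists $z\in\mathbb{F}_q$ with ${\rm Tr}(zx_0)\neq 0$. Replacing the summation variable $x$ by $x+z$ (a bijection of $\mathbb{F}_q$) and invoking additivity of the trace yields
$$S=\sum_{x\in\mathbb{F}_q}\zeta_p^{{\rm Tr}((x+z)x_0)}=\zeta_p^{{\rm Tr}(zx_0)}\sum_{x\in\mathbb{F}_q}\zeta_p^{{\rm Tr}(xx_0)}=\zeta_p^{{\rm Tr}(zx_0)}S.$$
Because ${\rm Tr}(zx_0)\in\mathbb{F}_p^{\ast}$ we have $\zeta_p^{{\rm Tr}(zx_0)}\neq 1$, and hence $(1-\zeta_p^{{\rm Tr}(zx_0)})S=0$ forces $S=0$.

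The main obstacle is justifying the existence of such a $z$, that is, the nonvanishing of the trace as a linear functional. This is precisely the nondegeneracy of the trace form, which follows from the separability of $\mathbb{F}_q/\mathbb{F}_p$: the trace map ${\rm Tr}:\mathbb{F}_q\to\mathbb{F}_p$ is surjective, so $y\mapsto {\rm Tr}(x_0y)$ cannot vanish identically when $x_0\neq 0$. Alternatively, one can avoid this point entirely through a counting argument: multiplication by $x_0$ is a bijection of $\mathbb{F}_q$, so $S=\sum_{y\in\mathbb{F}_q}\zeta_p^{{\rm Tr}(y)}$; grouping $y$ according to the value of ${\rm Tr}(y)$ and using that the surjective homomorphism ${\rm Tr}$ has fibers of equal size $q/p$ gives $S=(q/p)\sum_{c\in\mathbb{F}_p}\zeta_p^{c}=0$, the inner sum vanishing as the sum of all $p$-th roots of unity.
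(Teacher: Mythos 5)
Your proof is correct. Note that the paper itself offers no proof of this lemma at all---it is quoted verbatim from Myerson's paper \cite{[Myerson]} as a known fact---so there is no internal argument to compare against; what you have supplied is the standard orthogonality relation for the canonical additive character $\psi(y)=\zeta_p^{{\rm Tr}(y)}$. Both of your variants are sound: the shift trick $S=\zeta_p^{{\rm Tr}(zx_0)}S$ correctly forces $S=0$ once a $z$ with ${\rm Tr}(zx_0)\neq 0$ is produced, and you rightly identify that the only point requiring care is the nonvanishing (equivalently, surjectivity) of the trace map, which follows from separability of $\mathbb{F}_q/\mathbb{F}_p$, or even more concretely from the fact that ${\rm Tr}(x)=x+x^p+\cdots+x^{p^{k-1}}$ is a polynomial of degree $p^{k-1}<q$ and so cannot vanish on all of $\mathbb{F}_q$. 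Your second, counting-based argument---pulling the sum back through the bijection $x\mapsto x_0x$ and summing over the equal-sized fibers of ${\rm Tr}$ to get $(q/p)\sum_{c\in\mathbb{F}_p}\zeta_p^{c}=0$---is equally valid and has the merit of making the dependence on surjectivity explicit rather than incidental. Either version would serve as a complete, self-contained proof of the lemma the paper leaves as a citation.
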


\begin{lem} \cite{[BEW],[IR],[LN]}
The trace function ${\rm Tr}$ satisfies the following properties:

{\rm (i).} ${\rm Tr}(\alpha+\beta)={\rm Tr}(\alpha)+{\rm Tr}(\beta)$
for all $\alpha, \beta\in  \mathbb{F}_q.$

{\rm (ii).} ${\rm Tr}(c\alpha)=c{\rm Tr}(\alpha)$ for all
$c\in \mathbb{F}_p$ and $\alpha\in\mathbb{F}_q$.
\end{lem}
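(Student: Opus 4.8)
The plan is to verify both identities directly from the defining formula ${\rm Tr}(b)=\sum_{i=0}^{k-1}b^{p^i}$, exploiting only that $\mathbb{F}_q$ has characteristic $p$. The single structural fact carrying the argument is the additivity of the Frobenius endomorphism $x\mapsto x^p$; everything else is a formal regrouping of the defining sum.

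For part (i), first I would establish the identity $(\alpha+\beta)^p=\alpha^p+\beta^p$ for all $\alpha,\beta\in\mathbb{F}_q$. By the binomial theorem this reduces to showing that every intermediate coefficient $\binom{p}{j}$ with $0<j<p$ vanishes in $\mathbb{F}_q$, i.e.\ that $p\mid\binom{p}{j}$. This follows from the relation $j\binom{p}{j}=p\binom{p-1}{j-1}$: since $p$ is prime and $0<j<p$ we have $\gcd(p,j)=1$, so $p\mid\binom{p}{j}$. Iterating the identity $i$ times gives $(\alpha+\beta)^{p^i}=\alpha^{p^i}+\beta^{p^i}$ for every $i\geq 0$. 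Summing over $i=0,1,\ldots,k-1$ and separating the two groups of terms then yields ${\rm Tr}(\alpha+\beta)=\sum_{i=0}^{k-1}\big(\alpha^{p^i}+\beta^{p^i}\big)={\rm Tr}(\alpha)+{\rm Tr}(\beta)$.

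For part (ii), I would use that every $c\in\mathbb{F}_p$ satisfies $c^p=c$. This is Fermat's little theorem: the nonzero elements of $\mathbb{F}_p$ form a cyclic group of order $p-1$, so $c^{p-1}=1$ and hence $c^p=c$ when $c\neq 0$, while $c^p=c$ holds trivially for $c=0$. Consequently $c^{p^i}=c$ for all $i\geq 0$. Combining this with the multiplicativity of the $p^i$-power map, namely $(c\alpha)^{p^i}=c^{p^i}\alpha^{p^i}=c\,\alpha^{p^i}$, I can factor $c$ out of each summand and then out of the entire sum to conclude ${\rm Tr}(c\alpha)=\sum_{i=0}^{k-1}c\,\alpha^{p^i}=c\sum_{i=0}^{k-1}\alpha^{p^i}=c\,{\rm Tr}(\alpha)$.

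There is no genuine obstacle here, as the statement records two standard characteristic-$p$ facts: the additivity of Frobenius and the fixed-point property $c^p=c$ on the prime subfield. The only point meriting an explicit line of justification is the divisibility $p\mid\binom{p}{j}$ for $0<j<p$ underlying the first identity; the remainder of the argument is purely formal manipulation of the defining trace sum.
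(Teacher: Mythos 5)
Your proof is correct and complete: the divisibility $p\mid\binom{p}{j}$ for $0<j<p$ gives the additivity of the Frobenius map, and Fermat's little theorem gives $c^{p^i}=c$ on $\mathbb{F}_p$, which together yield both parts directly from the definition ${\rm Tr}(b)=\sum_{i=0}^{k-1}b^{p^i}$. The paper itself offers no proof, citing the lemma as a standard fact from \cite{[BEW],[IR],[LN]}, and your argument is exactly the standard textbook proof found in those references.
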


\begin{lem}\label{AX}
Let $R$ be a singular integer square matrix. Then the matrix
equation $RX=0$ has a nonzero integer solution.
\end{lem}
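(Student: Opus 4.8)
The plan is to reduce the problem to ordinary linear algebra over the rationals and then clear denominators. First I would observe that, since $R$ is singular, its determinant vanishes, so the linear map $X\mapsto RX$ from $\mathbb{Q}^n$ to $\mathbb{Q}^n$ (where $n$ is the order of $R$) fails to be injective. Consequently its kernel is a nonzero $\mathbb{Q}$-subspace of $\mathbb{Q}^n$, and in particular there exists a nonzero rational vector $Y=(y_1,\ldots,y_n)^T\in\mathbb{Q}^n$ with $RY=0$. Such a $Y$ can be produced explicitly by row-reducing $R$ over $\mathbb{Q}$ to echelon form: because $\mathrm{rank}(R)<n$, at least one column is free, and assigning the value $1$ to the corresponding free variable and solving back yields a nonzero solution with rational entries.

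The only remaining point is to replace $Y$ by an integer vector. Let $c$ be any common multiple of the denominators of $y_1,\ldots,y_n$ (for instance, the product of these denominators), so that $c\in\mathbb{N}^*$ and $cY\in\mathbb{Z}^n$. Setting $X:=cY$, we have $X\neq 0$ because $c\neq 0$ and $Y\neq 0$, while $RX=c(RY)=0$. Thus $X$ is the desired nonzero integer solution of $RX=0$.

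I do not expect any genuine obstacle here: the statement is elementary, and the entire content lies in passing from the guaranteed rational solution to an integer one by scaling. If a fully constructive and self-contained argument is preferred, one could instead invoke the Smith normal form of $R$: writing $R=UDV$ with $U,V\in\mathrm{GL}_n(\mathbb{Z})$ and $D$ diagonal, singularity forces some diagonal entry of $D$ to vanish; the corresponding standard basis vector $e_i$ then satisfies $De_i=0$, and $X:=V^{-1}e_i$ is a nonzero integer vector (since $V^{-1}\in\mathrm{GL}_n(\mathbb{Z})$) with $RX=UDVV^{-1}e_i=UDe_i=0$.
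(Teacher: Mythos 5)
Your proof is correct and follows essentially the same route as the paper: obtain a nonzero rational solution from singularity of $R$ over $\mathbb{Q}$, then clear denominators to get a nonzero integer solution. The Smith normal form alternative you mention is a nice constructive variant, but the core argument coincides with the paper's.
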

\begin{proof}
It is a standard result from linear algebra over $\mathbb Z$.
For the completeness, here we still provide a detailed proof.

Let the order of $R$ be $n$. Since $RX=0$ is solvable over
$\mathbb Q$, one may lets $X_0\in\mathbb{Q}^n$ be
a nonzero rational solution of $RX=0$. Then multiplying by
the least common denominator $m$ of all the components of
$X_0$ gives us that $mX_0\in \mathbb Z^n$ is a nonzero
integer solution of $RX=0$. Thus Lemma \ref{AX} is proved.
\end{proof}

\begin{lem}\label{QQQ}
Let $h(x)=\prod_{i=1}^{n}(x-\lambda_i)^{k_i}$ with
$\lambda_i\in \mathbb C$ and $k_i\in \mathbb N^{\ast}$
for $1\leq i\leq n$, and $\lambda_i\not=\lambda_j$ for
$1\le i\not=j\le n$. Let
$H(x)=\prod_{i=1}^{n}(x-\lambda_i)$ be the
radical of $h(x)$. If $h(x)\in \mathbb Z[x]$,
then $H(x)\in \mathbb Z[x]$.
\end{lem}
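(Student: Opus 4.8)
The plan is to prove the statement in two stages: first I would show that the radical $H(x)$ has rational coefficients, and only then upgrade rationality to integrality. The starting observation is that $h(x)=\prod_{i=1}^{n}(x-\lambda_i)^{k_i}$ is \emph{monic}, being a product of monic linear factors, and lies in $\mathbb{Z}[x]$ by hypothesis. Monicity together with integrality are precisely the two properties I intend to exploit; in particular, monicity forces every root $\lambda_i$ to be an algebraic integer.

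For the rationality of $H(x)$, the key identity I would use is $H(x)=h(x)/\gcd(h(x),h'(x))$. Since we are in characteristic zero, each root $\lambda_i$ occurs in $h'(x)$ with multiplicity exactly $k_i-1$, so the monic greatest common divisor of $h$ and $h'$ taken over $\mathbb{C}$ equals $\prod_{i=1}^{n}(x-\lambda_i)^{k_i-1}$, and dividing $h$ by it leaves precisely $\prod_{i=1}^{n}(x-\lambda_i)=H(x)$. The point I would emphasize is that the monic gcd is insensitive to the ground field: running the Euclidean algorithm on $h,h'\in\mathbb{Q}[x]$ yields the same monic polynomial as running it over $\mathbb{C}$. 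Hence $g(x):=\gcd(h(x),h'(x))\in\mathbb{Q}[x]$, and because $g\mid h$ in $\mathbb{C}[x]$ with both already in $\mathbb{Q}[x]$, the quotient $H(x)=h(x)/g(x)$ lies in $\mathbb{Q}[x]$ and is monic.

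To pass from $\mathbb{Q}[x]$ to $\mathbb{Z}[x]$, I would invoke the ring structure of the algebraic integers. The coefficients of $H(x)$ are, up to sign, the elementary symmetric functions of the $\lambda_i$, and since each $\lambda_i$ is an algebraic integer and the algebraic integers form a ring, these coefficients are again algebraic integers. By the previous step they are also rational, and a rational number that is an algebraic integer is an ordinary integer; therefore $H(x)\in\mathbb{Z}[x]$. Alternatively, I could finish via Gauss's lemma: the monic integer polynomial $h$ factors in $\mathbb{Q}[x]$ as $H\cdot(h/H)$ with both factors monic, whence both factors necessarily have integer coefficients.

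The only genuinely substantive ingredient is the rationality step, and within it the fact that the monic gcd does not change when the base field is enlarged from $\mathbb{Q}$ to $\mathbb{C}$; everything afterward is a routine application of integral closedness of $\mathbb{Z}$ in $\mathbb{Q}$ (or of Gauss's lemma). I therefore expect no real obstacle beyond being careful that $h$ is monic, which is exactly what guarantees the roots are algebraic integers and makes the final integrality argument valid.
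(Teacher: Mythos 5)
Your proof is correct, but it takes a genuinely different route from the paper's. The paper argues inside the UFD $\mathbb{Z}[x]$: it factors $h(x)=h_1^{e_1}(x)\cdots h_r^{e_r}(x)$ into powers of distinct irreducible polynomials $h_i(x)\in\mathbb{Z}[x]$, notes that in characteristic zero each $h_i$ is separable and that distinct irreducibles share no complex root, so $h_1(x)\cdots h_r(x)$ is squarefree with the same root set as $h(x)$ and therefore equals the monic radical $H(x)$; integrality is then automatic, since the $h_i$ already lie in $\mathbb{Z}[x]$. You instead characterize the radical as $H(x)=h(x)/\gcd\bigl(h(x),h'(x)\bigr)$, deduce $H(x)\in\mathbb{Q}[x]$ from the field-independence of the monic gcd under the extension $\mathbb{Q}\subseteq\mathbb{C}$, and then upgrade rationality to integrality either by integral closedness of $\mathbb{Z}$ in $\mathbb{Q}$ (the $\lambda_i$ are algebraic integers because $h$ is monic) or by Gauss's lemma. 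Each approach buys something: the paper's is shorter once unique factorization in $\mathbb{Z}[x]$ and separability of irreducibles are granted, while yours is more explicit and effective---it exhibits $H(x)$ as the output of a Euclidean algorithm computation---and it cleanly isolates where monicity enters. Note also that your ``multiplicity drops by exactly one under differentiation'' step and the paper's separability of the $h_i$ are really the same characteristic-zero fact in two guises, and your Gauss's lemma finish is essentially the ingredient hidden in the paper's appeal to unique factorization; so the two proofs, while organized quite differently, rest on closely related foundations.
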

\begin{proof} Since $h(x)\in \mathbb Z[x]$ and
$\mathbb{Z}[x]$ is a unique factorization domain
(U.F.D.), by the arithmetic fundamental theorem
over the ring $\mathbb{Z}[x]$, one may let
$$h(x)=h_1^{e_1}(x)\cdots h_r^{e_r}(x)$$
with $r, e_1, ..., e_r\in\mathbb{N}^*$ and
$h_1(x), ..., h_r(x)\in \mathbb Z[x]$ being
$r$ distinct irreducible polynomials.
Since each of $h_1(x), ..., h_r(x)$ has no
repeated complex roots and any two of
$h_1(x), ..., h_r(x)$ have no common complex
root, the product $h_1(x)\cdots h_r(x)$
has no repeated complex roots. Hence
the set of complex roots of
$h_1(x)\cdots h_r(x)$ is equal to the
set of complex roots of $h(x)$. But
the set of complex roots of $h(x)$
equals the set of complex roots of $H(x)$.
Thus the set of complex roots of
$h_1(x)\cdots h_r(x)$ is equal to
the set of complex roots of $H(x)$.
Notice that $H(x)$ has also no repeated
complex roots. It then follows from the
assumption that $H(x)$ and $h(x)$
are monic that
$$H(x)=h_1(x)\cdots h_r(x).$$
Thus $H(x)\in\mathbb Z[x]$ as required.

The proof of Lemma 2.4 is complete.
\end{proof}

For a polynomial $f(x)\in \mathbb Z[x]$ of degree $d$,
we denote by $\overline{f}(x)$ the {\it reciprocal polynomial}
of $f(x)$, i.e., $\overline{f}(x):=x^d f(x^{-1})$.

\begin{lem}\label{2.5}
Let $r(x)\in\mathbb Z[x]$ be a polynomial and let
$\{a_n\}_{n=1}^{\infty}$ be a linear recursion sequence
of integers. Then $\{a_n\}_{n=1}^{\infty}$ is generated
by $r(x)$ if and only if $\bar r(x)\sum_{s=1}^\infty a_sx^{s-1}$
is a polynomial of degree $<\deg r(x)$.
\end{lem}

\begin{proof}
Let $r(x)=\sum_{i=0}^{m}b_{m-i}x^i\in\mathbb Z[x]$,
where $m\ge 1$ is a integer.
Then $\overline{r}(x)=\sum_{i=0}^{m}b_{i}x^i$. It follows that
\begin{align}\label{eq89}
\overline{r}(x)\sum_{s=1}^\infty a_sx^{s-1}
=& \Big(\sum_{i=0}^{m}b_{i}x^i\Big)\Big(\sum_{s=1}^\infty a_sx^{s-1}\Big)\notag\\
=& \sum_{j=0}^{m-1}\Big(\sum_{i=0}^{j}b_{i}a_{j-i+1}\Big)x^j
 +\sum_{j=m}^{\infty}\Big(\sum_{i=0}^{m}b_{i}a_{j-i+1}\Big)x^j.
\end{align}
Notice that $\{a_n\}_{n=1}^{\infty}$ is generated by
$r(x)$ if and only if $\sum_{i=0}^{m}b_{i}a_{j-i+1}=0$
for all integers $j\geq m$. But by (\ref{eq89}),
the latter one is true if and only if
\begin{align*}
\overline{r}(x)\sum_{s=1}^\infty a_sx^{s-1}
=& \sum_{j=0}^{m-1}\Big(\sum_{i=0}^{j}b_{i}a_{j-i+1}\Big)x^j.
\end{align*}
Thus $\{a_n\}_{n=1}^{\infty}$ is generated by $r(x)$
if and only if $\bar r(x)\sum_{s=1}^\infty a_sx^{s-1}$
is a polynomial of degree $<\deg r(x)$. So Lemma
\ref{2.5} is proved.
\end{proof}

\begin{lem}\label{RRRR}
Let $r(x)\in \mathbb Z[x]$ be a monic polynomial
of degree $d$ with $r(0)\not=0$ and having $d$
different complex roots $\alpha_1,...,\alpha_d$.
Let $\{a_n\}_{n=1}^{\infty}$ be the linear
recursion sequence of integers generated
by $r(x)$. Then there are $d$ complex numbers
$\lambda_1, ..., \lambda_d$ which are uniquely
determined by the sequence $\{a_n\}_{n=1}^{\infty}$
such that
\begin{align}\label{2.1}
 \sum_{s=1}^{\infty}a_{s}x^{s-1}=\frac{\lambda_1}
{1-\alpha_1x}+...+\frac{\lambda_d}{1-\alpha_dx}.
\end{align}
Furthermore, we have $a_{s}=\sum_{i=1}^{d}\lambda_i\alpha_i^{s-1}$
for each integer $s\ge 1$, and $r(x)$ is the minimal polynomial
of $\{a_n\}_{n=1}^{\infty}$ if and only if all
of $\lambda_1, ..., \lambda_d$ are nonzero.
\end{lem}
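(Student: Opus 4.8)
The plan is to produce the closed form of the generating series first and then read off the minimal polynomial from which residues $\lambda_i$ vanish. Write $A(x):=\sum_{s=1}^{\infty}a_sx^{s-1}$ for the generating power series. Since $\{a_n\}_{n=1}^{\infty}$ is generated by $r(x)$, Lemma \ref{2.5} shows that $P(x):=\overline r(x)A(x)$ is a polynomial with $\deg P<d$. Because $r(0)\ne 0$ all roots $\alpha_i$ are nonzero, and as they are pairwise distinct we have $\overline r(x)=\prod_{i=1}^{d}(1-\alpha_ix)$, a polynomial of degree $d$ with the $d$ distinct simple roots $1/\alpha_1,\dots,1/\alpha_d$. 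Since $\deg P<\deg\overline r$ and $\overline r$ has only simple roots, the partial fraction expansion of $A(x)=P(x)/\overline r(x)$ yields unique complex numbers $\lambda_1,\dots,\lambda_d$ with $A(x)=\sum_{i=1}^{d}\lambda_i/(1-\alpha_ix)$, which is (\ref{2.1}); uniqueness is just the uniqueness of partial fractions with simple poles. Expanding each summand as the geometric series $1/(1-\alpha_ix)=\sum_{s\ge 1}\alpha_i^{s-1}x^{s-1}$ and comparing the coefficient of $x^{s-1}$ gives $a_s=\sum_{i=1}^{d}\lambda_i\alpha_i^{s-1}$ for every $s\ge 1$.

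For the last assertion I would record a generating criterion. Substituting $a_s=\sum_l\lambda_l\alpha_l^{s-1}$ into the recursion, for any $g(x)=\sum_{i=0}^{e}k_ix^i$ one computes $\sum_{i=0}^{e}k_ia_{j+i+1}=\sum_{l=1}^{d}\lambda_lg(\alpha_l)\alpha_l^{\,j}$ for every $j\ge 0$. Since the $\alpha_l$ are distinct the relevant Vandermonde determinant is nonzero, so the left side vanishes for all $j$ if and only if $\lambda_lg(\alpha_l)=0$ for every $l$; hence $g$ generates $\{a_n\}_{n=1}^{\infty}$ exactly when $g(\alpha_l)=0$ for all $l\in S:=\{l:\lambda_l\ne 0\}$. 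Equivalently, the residue formula $\lambda_k=\lim_{x\to 1/\alpha_k}(1-\alpha_kx)A(x)=P(1/\alpha_k)\big/\prod_{j\ne k}(1-\alpha_j/\alpha_k)$, with the product nonzero, shows that $\lambda_k=0$ precisely when $1/\alpha_k$ is a common root of $P$ and $\overline r$, so that $\deg\gcd(P,\overline r)=\#\{k:\lambda_k=0\}$.

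Finally I would deduce the minimal polynomial statement. If every $\lambda_i\ne 0$, then $S=\{1,\dots,d\}$, so any generating polynomial, and in particular the integer minimal polynomial $h$ of $\{a_n\}_{n=1}^{\infty}$, must vanish at all $d$ distinct points $\alpha_1,\dots,\alpha_d$ and hence has degree $\ge d$; since $r$ generates $\{a_n\}_{n=1}^{\infty}$ and $h\mid r$, we get $\deg h=d$ and $r=h$, so $r$ is the minimal polynomial. Conversely, if some $\lambda_j=0$, then $r(x)/(x-\alpha_j)$ still vanishes at every $\alpha_l$ with $l\in S$ and so is a generating polynomial of degree $d-1$, whence the minimal degree of a generating polynomial is strictly less than $d$ and $r$ cannot be minimal. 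The step that needs care, and the main obstacle, is that the minimal polynomial is required to lie in $\mathbb Z[x]$ whereas $r(x)/(x-\alpha_j)$ need not; I would bridge this by using Lemma \ref{2.5} to see that the minimal degree of a generating polynomial equals $\deg\overline r-\deg\gcd(P,\overline r)=|S|$, together with the fact that $\deg\gcd(P,\overline r)$ is the same whether computed over $\mathbb C$, $\mathbb Q$, or $\mathbb Z$, since clearing denominators preserves degree. This identifies $\deg h=|S|$, so $r$ is the minimal polynomial of $\{a_n\}_{n=1}^{\infty}$ if and only if $|S|=d$, i.e. if and only if all of $\lambda_1,\dots,\lambda_d$ are nonzero.
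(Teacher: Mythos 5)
Your proof is correct, and while its first half coincides with the paper's (Lemma \ref{2.5} to write $A(x)=P(x)/\overline r(x)$ with $\deg P<d$, the residue formula for $\lambda_k$, the partial-fraction identity (\ref{2.1}), and the expansion $a_s=\sum_i\lambda_i\alpha_i^{s-1}$), your treatment of the minimality criterion takes a genuinely different route. The paper proves that $r(x)$ is minimal if and only if $\gcd(\overline r(x),t(x))=1$ (its $t$ is your $P$) by two divisibility-and-contradiction arguments resting on the ideal structure of the set of generating polynomials: if the gcd is nontrivial, cancelling it and reapplying Lemma \ref{2.5} produces a shorter generating polynomial, and if $r=r_1g$ with $r_1$ minimal of smaller degree, then $\overline g\mid t$, contradicting coprimality; only afterwards does it translate coprimality into ``all $\lambda_i\neq0$'' via the residue formula. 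Your key substitute is the identity $\sum_i k_ia_{j+i+1}=\sum_l\lambda_l\,g(\alpha_l)\,\alpha_l^{\,j}$ combined with the Vandermonde argument, which gives a complete characterization of generating polynomials, namely the integer polynomials vanishing on $\{\alpha_l:\lambda_l\neq0\}$; this immediately yields the lower bound $\deg g\ge|S|$ for every generating polynomial with no ideal theory, and is more informative than the paper's argument since it identifies the complex zero set of the entire ideal of generating polynomials. For the converse you still must exhibit an integer generating polynomial of degree $|S|$, and you correctly flag the integrality obstacle (that $r(x)/(x-\alpha_j)$ need not lie in $\mathbb Z[x]$); your bridge---cancelling $\gcd(P,\overline r)$, whose degree is the same over $\mathbb C$, $\mathbb Q$ and $\mathbb Z$ by field-independence of the Euclidean algorithm and Gauss's lemma, and then invoking Lemma \ref{2.5}---is essentially the paper's forward-direction mechanism, so the two proofs share this one step. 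A small remark: you could avoid the gcd bridge altogether by observing that $S$ is stable under the Galois group of the splitting field of $r(x)$ over $\mathbb Q$ (apply an automorphism to $a_s=\sum_l\lambda_l\alpha_l^{s-1}$ and use the uniqueness of the $\lambda_l$), so that $\prod_{l\in S}(x-\alpha_l)$ has rational coefficients which are algebraic integers, hence lies in $\mathbb Z[x]$ and is a generating polynomial of degree $|S|$.
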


\begin{proof} Let
$t(x):=\overline{r}(x)\sum_{s=1}^{\infty}a_{s}x^{s-1}$.
Since $\{a_n\}_{n=1}^{\infty}$ is generated by $r(x)$,
by Lemma \ref{2.5} one knows that $t(x)$ is a polynomial
of integer coefficients and $\deg (t(x))<\deg (r(x))$.
Noticing that
$$\overline{r}(x)=x^d(x^{-1}-\alpha_1)...(x^{-1}-\alpha_d)
=(1-\alpha_1x)\cdots(1-\alpha_dx),$$
one derives that
\begin{align}\label{2.2}
\sum_{s=1}^{\infty}a_{s}x^{s-1}&=\frac{t(x)}{\overline{r}(x)}
=\frac{t(x)}{(1-\alpha_1x)\cdots(1-\alpha_dx)}.
\end{align}

Since $\alpha_1, ..., \alpha_d$ are pairwise distinct
and $r(0)\ne 0$ implying that none of $\alpha_1, ..., \alpha_d$
is zero, we have
$\prod_{j=1\atop{j\ne i}}^{d}(1-\alpha_j\alpha_i^{-1})\ne 0$.
So for any integer $k$ with $1\le k\le d$, one may let
\begin{align}\label{2.}
\lambda_k:=\frac{t(\alpha_k^{-1})}{\prod_{j=1\atop{j\ne k}}^{d}
(1-\alpha_j\alpha_k^{-1})}.
\end{align}
Then
$$t(\alpha_k^{-1})=\lambda_k\prod_{j=1\atop{j\ne k}}^{d}
(1-\alpha_j\alpha_k^{-1})=\sum_{i=1}^d
\lambda_i\prod_{j=1\atop j\ne i}^d(1-\alpha_j\alpha_k^{-1}).$$
Hence $\alpha_1^{-1}, ..., \alpha_d^{-1}$ are $d$ distinct
zeros of the polynomial
\begin{align}\label{2.3}
t(x)-\sum_{i=1}^{d}\lambda_i\prod_{j=1\atop j\ne i}^d(1-\alpha_jx).
\end{align}
But the degree of the polynomial in (\ref{2.3})
is clearly no more than $d-1$.
Hence the polynomial in (\ref{2.3}) is equal to zero.
One then derives that
\begin{align}\label{2.4}
t(x)=\sum_{i=1}^{d}\lambda_i\prod_{j=1\atop j\ne i}^d(1-\alpha_jx)
=\Big(\frac{\lambda_1}{1-\alpha_1x}+...+\frac{\lambda_d}{1-\alpha_dx}\Big)
\prod_{i=1}^d(1-\alpha_i x).
\end{align}
Thus (\ref{2.1}) follows immediately from (\ref{2.2})
and (\ref{2.4}). So (\ref{2.1}) is proved.

Now by (\ref{2.1}), we can deduce that
\begin{align*}
\sum_{s=1}^{\infty}a_{s}x^{s-1}&=\sum_{s=1}^{\infty}\lambda_1
(\alpha_1x)^{s-1}+...+\sum_{s=1}^{\infty}\lambda_d(\alpha_dx)^{s-1}
=\sum_{s=0}^{\infty}\Big(\sum_{i=1}^{d}\lambda_i\alpha_i^s\Big)x^s.
\end{align*}
Comparing the coefficients of $x^{s-1}$ on both sides gives us
$a_{s}=\sum_{i=1}^{d}\lambda_i\alpha_i^{s-1}$ as desired.

In what follows, we show that $r(x)$ is the minimal polynomial
of $\{a_n\}_{n=1}^{\infty}$ if and only if all the $\lambda_i$
($1\leq i\leq d$) are nonzero. To do so, we first show that
$r(x)$ is the minimal polynomial of $\{a_n\}_{n=1}^{\infty}$
if and only if $\gcd(\overline{r}(x),t(x))=1$.

Suppose that $r(x)$ is the minimal polynomial of
$\{a_n\}_{n=1}^{\infty}$. Let
$\gcd(\overline{r}(x),t(x))=\overline{d}(x)$.
If $\overline{d}(x)\ne 1$, then
$\deg(\overline{d}(x))\ge 1$ since the greatest
common divisor of all the coefficients of $r(x)$
is equal to 1.
Moreover, we write $t(x)=t_0(x)\overline{d}(x)$ and
$\overline{r}(x)=\overline{r}_0(x)\overline{d}(x)$
with $t_0(x), \overline{r}_0(x)\in\mathbb Z[x]$.
Since $t(x)=\overline{r}(x)\sum_{s=1}^{\infty}
a_{s}x^{s-1}\in\mathbb{Z}[x]$ and
$\deg(t(x))<\deg( r(x))$, one has
$t_0(x)=\overline{r}_0(x)\sum_{s=1}^\infty a_s x^{s-1}$.
But $r(0)\ne 0$ tells us that
$\deg(\overline{r}_0(x))=\deg r(x)$
and $\overline{r}_0(0)\ne 0$. It then follows that
\begin{align*}
\deg(t_0(x))=&\deg(t(x))-\deg(\overline{d}(x))
<\deg(r(x))-\deg(\overline{d}(x))\\
=&\deg(\overline{r}(x))-\deg(\overline{d}(x))
=\deg(\overline{r}_0(x))=\deg(r_0(x)),
\end{align*}
where $r_0(x)$ is the reciprocal polynomial of
$\overline{r}_0(x)$. So by Lemma 2.5, one knows that
${r}_0(x)$ is a generating polynomial of
$\{a_n\}_{n=1}^{\infty}$. This contradicts with
the assumption that $r(x)$ is the minimal polynomial
of $\{a_n\}_{n=1}^{\infty}$ since $\deg(r_0(x))<\deg(r(x))$.
Hence we must have $\gcd(\overline{r}(x),t(x))=1.$

Conversely, let $\gcd(\overline{r}(x),t(x))=1$.
Assume that $r(x)$ is not the minimal polynomial of
$\{a_n\}_{n=1}^{\infty}$. Since $r(x)$ is monic,
there exists a polynomial $r_1(x)$ of degree $<\deg(r(x))$
which is the minimal polynomial of $\{a_n\}_{n=1}^{\infty}$.
By the fact that all the generating polynomials of $\{a_n\}_{n=1}^{\infty}$
forms an ideal generated by $r_1(x)$, one may let $r(x)=r_1(x)g(x)$ with
$g(x)\in\mathbb Z[x]$ and $\deg(g(x))\geq 1$. Evidently, we have
$\overline{r}(x)=\overline{r}_1(x)\overline{g}(x)$. By Lemma
\ref{2.5}, one may let
$$\sum_{s=1}^{\infty}a_{s}x^{s-1}=\frac{t_1(x)}{\overline{r}_1(x)}
\ {\rm for \ some} \ t_1(x)\in \mathbb Z[x].$$
Then
\begin{align*}
0=\frac{t(x)}{\overline{r}(x)}-\frac{t_1(x)}{\overline{r}_1(x)}
=\frac{t(x)}{\overline{r}(x)}-\frac{t_1(x)\overline{g}(x)}
{\overline{r}_1(x)\overline{g}(x)}
=\frac{t(x)-t_1(x)\overline{g}(x)}{\overline{r}(x)}.
\end{align*}
This implies that $t(x)=t_1(x)\overline{g}(x)$. So $\overline{g}(x)|t(x)$
which contradicts with the fact that
$$\gcd(\overline{r}_1(x)\overline{g}(x),t(x))=\gcd(\overline{r}(x),t(x))=1.$$
Thus $r(x)$ is the minimal polynomial of $\{a_n\}_{n=1}^{\infty}$.
This ends the proof of the statement that $r(x)$ is the minimal polynomial
of $\{a_n\}_{n=1}^{\infty}$ if and only if $\gcd(\overline{r}(x),t(x))=1$.

Finally, by (\ref{2.}) we derive that $\lambda_i\not=0$ for all $1\leq i\leq d$
if and only if none of the roots $\alpha_i^{-1}$, $1\leq i\leq d$, of $\overline{r}(x)$
is a zero of $t(x)$ which is equivalent to $\gcd(\overline{r}(x),t(x))=1$.
Therefore $r(x)$ is the minimal polynomial of $\{a_n\}_{n=1}^{\infty}$ if and only
if all the $\lambda_i$ ($1\leq i\leq d$) are nonzero.

This completes the proof of Lemma \ref{RRRR}.
\end{proof}

\section{Proofs of Theorems \ref{thm3} and \ref{thm2}}

In this section, we present the proofs of Theorems
\ref{thm3} and \ref{thm2}. We begin with the proof
of Theorem \ref{thm3}.\\
\\
{\it Proof of Theorem \ref{thm3}.}
(i). Let
$m_f(x):=\prod_{\lambda\in\Omega_f}(x-\lambda).$ First of all, we prove that $m_f(x)$ is of integer coefficients.
By Lemma \ref{M}, we have
\begin{align}\label{eq1}
N_{s}(f,a) &=     \frac{1}{q}\sum_{m\in \mathbb F_q}
\sum_{(x_1,\cdots,x_s)\in \mathbb F_q^s}\zeta_p^{{\rm Tr}(m(f(x_1)+\cdots+f(x_s)-a))}\nonumber\\
&= \frac{1}{q}\sum_{m\in \mathbb F_q}\Big(\sum_{x\in \mathbb F_q}\zeta_p^{{\rm Tr}(mf(x))}
\Big)^s \zeta_p^{{\rm Tr}(-ma)}  \nonumber\\
&=q^{s-1}+ \frac{1}{q}\sum_{m\in \mathbb F_q^{\ast}}\Big(\sum_{x\in \mathbb F_q}
\zeta_p^{{\rm Tr}(mf(x))}\Big)^s\zeta_p^{{\rm Tr}(-ma)}\notag\\
      &=q^{s-1}+ \frac{1}{q}\sum_{m\in \mathbb F_q^{\ast}}S_{f,m}^s\zeta_p^{{\rm Tr}(-ma)}.
\end{align}
Then it follows from (\ref{eq1}) that
\begin{align}\label{3.1}
& qu_{s}(f,a)=qN_{s}(f,a)-q^s=\sum_{m\in \mathbb F_q^{\ast}}S_{f, m}^s \zeta_p^{{\rm Tr}(-ma)}
=\sum_{m\in \mathbb F_q^{\ast}\atop{S_{f,m}\not=0}}S_{f,m}^s \zeta_p^{{\rm Tr}(-ma)}.
\end{align}

Let
\begin{align}\label{6.1}
 g(x):=\prod_{m\in \mathbb F_q^*}(x-S_{f,m}):=\sum_{i=0}^{q-1}b_{i}x^i.
 \end{align}
Then $b_i\in \mathbb Q(\zeta_p)$ for all integers $i$
with $0\leq i\leq q-1$.

Now pick a $\sigma\in {\rm Gal}(\mathbb Q(\zeta_p)/\mathbb Q)$,
where ${\rm Gal}(\mathbb Q(\zeta_p)/\mathbb Q)$ is the Galois group
of the cyclotomic field $\mathbb Q(\zeta_p)$ over $\mathbb Q$.
One may let $\sigma(\zeta_p)=\zeta_p^h$ for some integer $h$
with $1\le h\le p-1$. Then one can deduce that
\begin{small}
\begin{align*}
\sigma(S_{f,m})=&\sigma\big(\sum_{x\in \mathbb F_q}\zeta_p^{{\rm Tr}(mf(x))}\big)
=\!\sum_{x\in \mathbb F_q}\sigma(\zeta_p)^{{\rm Tr}(mf(x))}
=\!\sum_{x\in \mathbb F_q}\zeta_p^{h{\rm Tr}(mf(x))}
=\!\sum_{x\in \mathbb F_q}\zeta_p^{{\rm Tr}(hmf(x))}
=\!S_{f,hm}.
\end{align*}
\end{small}\\
Since $1\le h\le p-1$, one has $\{hm| m\in\mathbb F_q^*\}
=\mathbb F_q^*$. It then follows that for any
$\sigma\in {\rm Gal}(\mathbb Q(\zeta_p)/\mathbb Q)$, we have
$$
\sigma (g(x))=\prod_{m\in \mathbb F_q^*}(x-\sigma(S_{f,m}))
=\prod_{m\in \mathbb F_q^*}(x-S_{f, hm})
=\prod_{m\in \mathbb F_q^*}(x-S_{f,m})=g(x).
$$
Hence we must have $b_i\in \mathbb Q$ for all
integers $i$ with $0\leq i\leq q-1$.

On the other hand, it is well known that all the algebraic integers
in $\mathbb Q(\zeta_p)$ form a ring which is $\mathbb Z[\zeta_p]$.
By (3.3), we know that each coefficient $b_i$ ($0\leq i\leq q-1$)
is a linear $\mathbb{Z}$-combination of powers of $\zeta_p$. In
other words, $b_i\in \mathbb Z[\zeta_p]$
for each integer $i$ with $0\leq i\leq q-1$. Hence
$$b_i\in \mathbb Q\cap \mathbb Z[\zeta_p]=\mathbb Z$$
for all $0\leq i\leq q-1$ and so $g(x)\in \mathbb Z[x]$.
Write $g(x)=x^e h(x)$ with $e$ being a nonnegative integer,
$h(x)\in\mathbb{Z}[x]$ and $h(0)\ne 0$. Evidently,
by (3.3) we have
\begin{align}\label{6.1}
h(x)=\prod_{\lambda\in \Omega_f}(x-\lambda)^{k_{\lambda}}
\end{align}
with all $k_{\lambda}$ being positive integers.
Then Lemma \ref{QQQ} applied to $h(x)$ gives us
that $m_f(x)\in \mathbb Z[x].$

Consequently, we show that the integral coefficients polynomial
$m_f(x)$ is a generating polynomial of $\{u_{s}(f,a)\}_{s=1}^{\infty}$.
Let $d=\deg(m_f(x))$. Since $m_f(x)$ is monic, one may let
$$m_f(x)=x^d+\sum_{i=0}^{d-1}a_ix^i,~~a_i\in \mathbb Z.$$
If $S_{f,m}\ne 0$ for $m\in \mathbb F_q^{\ast}$, then
$m_f(S_{f, m})=0$ that infers that
\begin{eqnarray*}
S_{f,m}^{d}+a_{d-1}S_{f,m}^{d-1}+\cdots+a_1S_{f,m}+a_{0}=0.
\end{eqnarray*}
Multiplying by $S_{f,m}^{s-d}\zeta_p^{{\rm Tr}(-ma)}$
on both sides gives us that
$$S_{f,m}^{s}\zeta_p^{{\rm Tr}(-ma)}+a_{d-1}
S_{f,m}^{s-1}\zeta_p^{{\rm Tr}(-ma)}+\cdots+
a_{0}S_{f,m}^{s-d}\zeta_p^{{\rm Tr}(-ma)}=0$$
for any integer $s\ge d.$
Then taking sum tells that
\begin{equation}\label{3.2}
\sum_{m\in \mathbb F_q^{\ast}\atop{S_{f,m}\not=0}}
S_{f,m}^{s}\zeta_p^{{\rm Tr}(-ma)}
+a_{d-1}\sum_{m\in \mathbb F_q^{\ast}\atop{S_{f,m}\not=0}}
S_{f,m}^{s-1}\zeta_p^{{\rm Tr}(-ma)}+\cdots+a_{0}\sum_{m\in
\mathbb F_q^{\ast}\atop{S_{f,m}\not=0}}S_{f,m}^{s-d}\zeta_p^{{\rm Tr}(-ma)}=0
\end{equation}
for all integers $s\geq d$. Putting (\ref{3.1})
into (\ref{3.2}) gives that for any integer $s\ge d+1$,
one derives that
\begin{eqnarray}\label{999}
u_{s}(f,a)+a_{d-1}u_{s-1}(f,a)+\cdots+a_{0}u_{s-d}(f,a)=0.
\end{eqnarray}
Thus $\{u_{s}(f,a)\}_{s=1}^{\infty}$ is a linear
recursion sequence, and $m_f(x)$ is a generating polynomial
of $\{u_{s}(f,a)\}_{s=1}^{\infty}$. This implies that the
minimal polynomial of $\{u_{s}(f,a)\}_{s=1}^{\infty}$
divides $m_f(x)$ as desired.

Let us now show that $m_f(x)$ is the minimal
polynomial of $u_{s}(f,0)$. By (\ref{3.1}), we have
\begin{align}\label{SCHS}
\sum_{s=1}^{\infty}u_{s}(f,a)x^{s-1}
=&\sum_{s=1}^{\infty}\sum_{m\in \mathbb F_q^{\ast}
\atop{S_{f,m}\not=0}}\frac{S_{f,m}^s
\zeta_p^{{\rm Tr}(-ma)}}{q} x^{s-1}\notag\\
=&\sum_{m\in \mathbb F_q^{\ast}\atop{S_{f,m}\not=0}}\frac{\zeta_p^{{\rm Tr}(-ma)}}{q}\sum_{s=1}^{\infty}S_{f,m}^s x^{s-1}\notag\\
=&\sum_{m\in \mathbb F_q^{\ast}\atop{S_{f,m}\not=0}}
\frac{\zeta_p^{{\rm Tr}(-ma)}}{q}\frac{S_{f,m} }{1-S_{f,m} x}\\
=&\sum_{\lambda\in \Omega_f}\frac{\frac{\lambda}{q}{\sum_{m\in \{x\in
\mathbb F_q^{\ast}:S_{f,x}=\lambda\}}}\zeta_p^{{\rm Tr}(-ma)}}{1-\lambda x}.\notag
\end{align}
Let $a=0$. Then $\zeta_p^{{\rm Tr}(-ma)}=1$ for all
$m\in \mathbb F_q$. Notice that $\lambda\ne0$ for all
$\lambda\in \Omega_f$. Hence
$$\frac{\lambda}{q}\sum_{m\in \{x\in \mathbb F_q^{\ast}:
S_{f,x}=\lambda\}}\zeta_p^{{\rm Tr}(-ma)}
=\frac{\lambda}{q}\sharp\{x\in \mathbb F_q^{\ast}:
S_{f,x}=\lambda\} \not=0.$$
Thus by Lemma \ref{RRRR}, one knows that $m_f(x)$
equals the minimal polynomial of $\{u_{s}(f,0)\}_{s=1}^\infty$.
Part (i) is proved.

(ii). By (\ref{SCHS}), we deduce that
\begin{align*}
\sum_{s=1}^{\infty}u_{s}(f,a)x^s
=&\frac{1}{q}\sum_{m\in \mathbb F_q^{\ast}\atop{S_{f,m}\not=0}}
\zeta_p^{{\rm Tr}(-ma)}\frac{S_{f,m}x }{1-S_{f,m} x}\\
=&-\frac{x}{q}\sum_{m\in \mathbb F_q^{\ast}\atop{S_{f,m}\not=0}}
\frac{\zeta_p^{{\rm Tr}(-ma)} }{x-\frac{1}{S_{f,m}} }\\
=&-\frac{x}{q}\frac{\sum_{n\in \mathbb F_q^{\ast}\atop{S_{f,n}\not=0}}
\zeta_{p}^{{\rm Tr}(-n a)}\prod_{m\in \mathbb F_q^{\ast}\setminus\{n\}\atop{S_{f,m}\not=0}}\big(x-\frac{1}
{S_{f,m}}\big)}{\prod_{m\in \mathbb F_q^{\ast}\atop{S_{f,m}\not=0}} \big(x-\frac{1}{S_{f,m}}\big) }\\
:=&-\frac{ x \tilde M_{f,a}(x)}{qM_f(x)}.
\end{align*}
It then follows that
$$\sum_{s=1}^{\infty}N_{f,s}(a)x^s=\sum_{s=1}^{\infty}
\big(u_{f,s}(a)+q^{s-1}\big)x^s
=\frac{x}{1-qx}-\frac{ x \tilde M_{f,a}(x)}{qM_f(x)}.$$
as required.

In particular, if $a=0$, then
$$
\tilde M_{f,a}(x)=\tilde M_{f,0}(x)=\sum_{n\in\mathbb
F_q^{\ast}\atop{S_{f,n}\not=0}}\prod_{m\in \mathbb
F_q^{\ast}\setminus\{n\}\atop{S_{f,m}\not=0}}
\Big(x-\frac{1}{S_{f,m}}\Big)=M_f^{\prime}(x).
$$
In other words, $\tilde M_{f,0}(x)$ equals the derivative
of $M_f(x)$. So part (ii) is proved.

This finishes the proof of Theorem \ref{thm3}. \hfill$\Box$\\

Finally, we show Theorem \ref{thm2} as the conclusion
of this section.\\
\\
{\it Proof of Theorem \ref{thm2}.} For brevity, we write
$u_{s}(f,a)$ as $u_s$ for all positive integer $s$
in what follows. Since $d=\deg\{u_s\}_{s=1}^{\infty}$,
by Theorem \ref{thm3} one knows that the sequence
$\{u_{s}(f,a)\}_{s=1}^{\infty}$ is a linear recursion
sequence. So one may let
$$g(x)=\sum_{i=0}^{d-1}a_ix^i+x^d$$
be any given generating polynomial of
$\{u_s\}_{s=1}^{\infty}$. Then
\begin{eqnarray}\label{5.2}
a_0u_s+...+a_{d-1}u_{s+d-1}+u_{s+d}=0
\end{eqnarray}
holds for all positive integer $s$. It follows that
$$a_0A_1+...+a_{d-1}A_{d}+A_{d+1}=0,$$
where for any integer $i$ with $1\le i\le d+1$,
$A_i:=(u_i, u_{i+1}, ..., u_{i+d})^T$
stands for the $i$-th column of the $(d+1)\times (d+1)$
matrix $A=(u_{i j})_{1\le i,j\le d+1}$.
Since $X=(a_0,...,a_{d-1},1)^T\in\mathbb{Z}^{d+1}$
is a nonzero solution of the matrix equation $AX=0$,
$A$ is singular. Then Lemma \ref{AX} tells us that
$AX=0$ has nonzero integer solutions.
Now we pick $X_0=(c_{d}, ..., c_1, c_0)$
to be such an arbitrary solution. Then
\begin{eqnarray}\label{5.1}
c_{d}u_i+\cdots+c_1u_{i+d-1}+c_0u_{i+d}=0
\end{eqnarray}
for all integers $i$ with $1\leq i\leq d+1.$

In what follows, we use induction on $i$ to show that
(\ref{5.1}) is true for all positive integers $i$.
First of all, since (\ref{5.1}) holds
for all positive integers $i\le d+1$, one may let
$r$ be an integer with $r\ge d+1$, and we assume that
(\ref{5.1}) is true for all positive integers $i\le r$.
In the following, we prove that (\ref{5.1}) remains
true for the $r+1$ case.

Letting $s=r+1-d, r+2-d, ..., r, r+1$ in (\ref{5.2})
and then applying the inductive hypothesis, one arrives at
\begin{align*}
&c_{d}u_{r+1}+c_{d-1}u_{r+2}+\cdots+c_1 u_{r+d}+c_0 u_{r+d+1}\\
= & -c_{d}\sum_{l=0}^{d-1}a_l u_{r+1-d+l}-c_{d-1}\sum_{l=0}^{d-1}a_l u_{r+2-d+l}
 -...-c_0\sum_{l=0}^{d-1}a_l u_{r+1+l}\\
=& -a_0 \sum_{t=0}^dc_t u_{r+1-t}-a_1\sum_{t=0}^d c_t u_{r+2-t}-...
-a_{d-2}\sum_{t=0}^d c_t u_{r+d-t-1}-a_{d-1}\sum_{t=0}^d c_t u_{r+d-t}\\
=&0.
\end{align*}
Hence (\ref{5.1}) is valid for the $r+1$ case. Hence (\ref{5.1}) holds
for all positive integers $i$.

Finally, applying (\ref{5.1}) we can deduce that
\begin{eqnarray*}
&&(c_0+c_{1}x+\cdots+c_{d}x^{d})\sum_{s=1}^{\infty}u_sx^s\\
&=&\sum_{i=1}^{d}\Big(\sum_{j+k=i\atop k\ge 0, j\ge 1}c_ku_j\Big)x^i
+\sum_{i=d+1}^{\infty}\Big(\sum_{j+k=i\atop 0\le k\le d, j\ge 1}c_ku_j\Big)x^i\\
&=&\sum_{i=1}^{d}\Big(\sum_{j+k=i\atop k\ge 0, j\ge 1}c_ku_j\Big)x^i.
\end{eqnarray*}
However, $u_s=N_{s}(f,a)-q^{s-1}$ for any integer $s\ge 1$.
It then follows that
\begin{align*}
\sum_{s=1}^{\infty}N_{s}(f,a)x^s=&\frac{x}{1-qx}+\sum_{s=1}^{\infty}u_sx^s\\
=&\frac{x}{1-qx}+\frac{\sum_{i=1}^{d}(\sum_{j+k=i\atop k\ge 0, j\ge 1}c_ku_j)x^i}
 {c_0+c_{1}x+\cdots+c_{d}x^{d}}
\end{align*}
as expected.

This concludes the proof of Theorem \ref{thm2}. \hfill$\Box$


\section{Examples}

In this section, we supply two examples to illustrate
the validity of Theorems 1.1 and 1.2. We write $N_{s}(a)$
as $N_{s}(f,a)$ and $u_s$ as $u_{s}(f,a)$ for convenience.\\
\\
%
%
{\bf Example 4.1.} Let $q=p\equiv1\pmod3$ and $f(x)=x^3$.
By \cite{[Chowla]}, we have $N_1(0)=1,N_2(0)=3p-2,
N_3(0)=p^2+b(p-1),N_4(0)=p^3+6p(p-1),
~N_5(0)=p^4+5pb(p-1),~N_6(0)=p^5+(18p^2+pb^2)(p-1)$
and $N_7(0)=p^6+21p^2b(p-1),$ where $4p=b^2+27c^2
~~\text {and}~~b\equiv1\pmod3.$
Then
$u_1=0,~~u_2=2(p-1),~~u_3=b(p-1),~~u_4=6p(p-1),
~~u_5=5pb(p-1),~~u_6=(18p^2+pb^2)(p-1)$ and
$u_7=21p^2b(p-1)$. Then the Hankel matrix $A$ is given by
\begin{equation*}
A=(p-1)\left(
\begin{array}{ccccc}
0 & 2 &b&6p\\
2 & b &6p&5pb\\
b & 6p &5pb& 18p^2+pb^2\\
6p & 5pb &18p^2+pb^2&21p^2b\\

\end{array}
\right).
\end{equation*}
It follows that $(-pb,-3p,0,1)^T:=(c_3,c_2,c_1,c_0)^T$
is a solution of the matrix equation $AX=0.$
By Theorem \ref{thm2}, we obtain that
\begin{align*}
&\sum_{i=1}^{4}\Big(\sum_{j+k=i\atop k\ge 0, j\ge 1}c_ku_j\Big)x^i\\
  =&c_0u_1x+(c_0u_2+c_1u_1)x^2+(c_0u_3+c_1u_2+c_2u_1)x^3\\
  =&2(p-1)x^2+b(p-1)x^3.
\end{align*}
Hence
\begin{align*}
\sum_{s=1}^{\infty}N_{s}(0)x^s
=&\frac{x}{1-7x}+\frac{2(p-1)x^2+b(p-1)x^3}
{1-3px^2-pbx^3}.
\end{align*}
This is exactly Chowla, Cowles and Cowles' formula
presented in \cite{[Chowla]}.\\
\\
{\bf Example 4.2.} Let $q=5$ and $f(x)=x^2+x^3$.
By calculations, one finds that
$N_1(1)=1,~~N_2(1)=4,~~N_3(1)=20,~~N_4(1)=120,
~~N_5(1)=650,~~N_6(1)=3225,~~N_7(1)=15750,
N_8(1)=78000$ and $N_9(1)=390000$. Then we have
$u_1=0,~~u_2=-1,~~u_3=-5,~~u_4=-5,~~u_5=25,
~~u_6=100,~~u_7=125,~~u_8=-125$ and $u_9=-625.$
So the Hankel matrix $A$ is given by
\begin{equation*}
A=\left(
\begin{array}{ccccc}
0 & -1 & -5&-5&25\\
-1 & -5 &-5&25&100\\
-5 & -5 &25& 100&125\\
-5 & 25 &100&125&-125\\
25 &100&125&-125&-625\\
\end{array}
\right).
\end{equation*}
It then follows that $(25,-25,15,-5,1)^T:=(c_4,c_3,c_2,c_1,c_0)^T$
is a solution of $AX=0.$ Applying Theorem \ref{thm2}, one gets that
\begin{align*}
&\sum_{i=1}^{4}\Big(\sum_{j+k=i}c_ku_j\Big)x^i\\
  =&c_0u_1x+(c_0u_2+c_1u_1)x^2+(c_0u_3+c_1u_2+c_2u_1)x^3
  +(c_0u_4+c_1u_3+c_2u_2+c_3u_1)x^4\\
  =&-x^2+5x^4.
\end{align*}
Therefore
\begin{align*}
\sum_{s=1}^{\infty}N_{s}(1)x^s=&\frac{x}{1-5x}+\frac{-x^2+5x^4}
{1-5x+15x^2-25x^3+25x^4}.
\end{align*}

On the other hand, let $g(x):=1-5x+15x^2-25x^3+25x^4$. Then
$$\gcd(-x^2+5x^4,g(x))=1.$$
It follows from the proof of Lemma 2.5 that
$\overline{g}(x)=x^4 g(\frac{1}{x})$ equals the
minimal polynomial of $\{N_s(1)\}_{s=1}^{\infty}$.
By Theorem \ref{thm3}, one then deduces that $g(x)$
divides $M_f(x)$ and $\deg(M_f(x))\leq |\mathbb{F}^*_5|=4$.
Since $\deg g(x)=4$, one has $M_f(x)=\frac{1}{25}g(x)$. Therefore
\begin{align*}
\sum_{s=1}^{\infty}N_{s}(0)x^s=&\frac{x}{1-5x}
-\frac{x}{5}\frac{(1-5x+15x^2-25x^3+25x^4)^{\prime}}
{1-5x+15x^2-25x^3+25x^4}\\
=&\frac{x}{1-5x}+\frac{x-6x^2+15x^3-20x^4}
{1-5x+15x^2-25x^3+25x^4}.
\end{align*}\\

\begin{center}
{\sc Acknowledgements}
\end{center}
The authors would like to thank the anonymous referee
for careful reading of the manuscript and helpful
suggestions and comments that improve the presentation
of the paper.

\bibliographystyle{amsplain}

\end{document}